\newtheoremstyle{theorem}
  {10pt}          
  {10pt}  
  {\sl}  
  {\parindent}     
  {\bf}  
  {. }    
  { }    
  {}     
\newtheorem{thm}{Theorem}[section]
\newtheorem{prop}[thm]{Proposition}
\newtheorem{defn}{Definition}[section]
\newtheorem{exam}[defn]{Example}
\newtheorem{rem}{Remark}[section]
\begin{document}

\thispagestyle{empty} \setcounter{page}{1}



\begin{center}
{\Large\bf Integration by parts and its applications of a new nonlocal fractional derivative with Mittag-Leffler nonsingular kernel}

\vskip.20in
 T. Abdeljawad$^{a}$, D. Baleanu$^{b,c}$ \\[2mm]
{\footnotesize $^{a}$Department of Mathematics and Physical Sciences,
Prince Sultan
University\\ P. O. Box 66833,  11586 Riyadh, Saudi Arabia\\
 $^{b}$Department of Mathematics and Computer Science, \c{C}ankaya University\\ 06530 Ankara, Turkey\\
 email: dumitru@cankaya.edu.tr\\
 $^{c}$ Institute of Space Sciences, Magurele-Bucharest, Romania}

\end{center}

\vskip.2in

{\footnotesize {\noindent Abstract.

 In this manuscript we define the right fractional derivative and its corresponding right fractional integral for the recently introduced nonlocal fractional derivative with Mittag-Leffler kernel. Then, we obtain the related integration by parts formula. We use  the $Q-$operator to confirm our results. The corresponding  Euler-Lagrange equations are obtained and one illustrative example is discussed.}
 \\

{\bf Keywords}: \emph{fractional calculus,Mittag-Leffler function,fractional integration by parts,\\
fractional Euler-Lagrange equations}

\section{Introduction}

Fractional calculus is developing faster during the last few years and many phenomena possessing the power law effect were described accurately with fractional models \cite{podlubny, Samko, Magin, Kilbas,  6, 7,8,9}.
Many excellent results of the fractional models  were reported in various fields of science and engineering. One of the specificity of the fractional calculus is that we have many fractional derivatives which gives the researcher the opportunity to choose  the specific fractional derivative which corresponds better to a given real world problem.
The description of phenomena with memory effect is still a big challenge for the researchers, therefore new tools  and methods should be created to be able to get better description of the real world phenomena and the existing models. In this respect it seems that there is a need of new fractional derivatives with nonsingular kernel.
One of  the best candidates among the existing kernels is the one based on Mittag-Leffler(ML) functions \cite{Abdon}.
Based on this, very recently a new fractional derivative \cite{Abdon} was constructed and applied to several real world problems \cite{Abdon1,Abdon2}. For the nonlocal fractional derivatives with nonsingular  exponential
 kernel we refer to \cite{FCaputo, Losada} and for other local approaches of the fractional derivatives we refer to the recent manuscripts \cite{Roshdi,Thconformable}.
In this paper we would like to present several important properties of the new derivative introduced in  \cite{Abdon} in order to see the advantages of it as well as in order to start to apply it in fractional variational principles and optimal control problems.
Having above mentioned thinks in mind we  present in the first chapter the fundamental integration by parts formula. Integration by parts is of great importance in  fractional calculus \cite{Kilbas} and discrete fractional calculus \cite{TDbyparts,THFer, Thsh,ThdualCaputo}. In the third chapter we developed the corresponding fractional Euler-Lagrange equations and we give an illustrative example of it.

\indent
From the classical fractional calculus, we recall

\begin{itemize}
  \item The left Riemann-Liouville fractional of order $\alpha >0$ starting from  $a$ is defined by
   $$(~_{a}I^\alpha f)(t)=\frac{1}{\Gamma(\alpha)}\int_a^t (t-s)^{\alpha-1}f(s)ds.$$
  \item The right Riemann-Liouville fractional of order $\alpha >0$ ending at $b>a$ is defined by
   $$(I_b^\alpha f)(t)=\frac{1}{\Gamma(\alpha)}\int_t^b (s-t)^{\alpha-1}f(s)ds.$$
  \item The left Riemann-Liouville fractional derivative of order $0<\alpha <1$ starting at $a$ is defined by
  $$(~_{a}D^\alpha f)(t)=\frac{d}{dt}(~_{a}I^{1-\alpha} f)(t).$$
  \item The right Rieemann-Liouville fractional derivative of order $0<\alpha <1$ ending  at $b$ is defined by
  $$(D_b^\alpha f)(t)=\frac{-d}{dt}(I_b^{1-\alpha} f)(t).$$
\end{itemize}

\section{The right fractional derivative and integration by parts formula}

If $f$ is defined on an interval $[a,b]$, then the action of the $Q-$operator is defined as $(Q f)(t)=f(a+b-t)$.
\begin{defn} \cite{Abdon}
Let $f \in H^1(a,b),~~a<b,~~\alpha \in [0,1]$, then the definition of the new (left Caputo) fractional derivative in the sense of Abdon and Baleanu is defined by:
\begin{equation}\label{d1}
 ( ~~^{ABC}~_{a}D^\alpha f)(t)=\frac{B(\alpha)}{1-\alpha} \int_a^t f^\prime(x)E_\alpha[-\alpha \frac{(t-x)^\alpha}{1-\alpha}]dx
\end{equation}

and in the left Riemann-Liouville sense by:
\begin{equation}\label{d2}
 ( ~~^{ABR}~_{a}D^\alpha f)(t)=\frac{B(\alpha)}{1-\alpha}\frac{d}{dt} \int_a^t f(x)E_\alpha[-\alpha \frac{(t-x)^\alpha}{1-\alpha}]dx.
\end{equation}
The associated fractional integral by
\begin{equation}\label{d3}
  (~^{AB}_{a}I^\alpha f)(t)=\frac{1-\alpha}{B(\alpha)}f(t)+\frac{\alpha}{B(\alpha)}(~_{a}I^\alpha f)(t).
\end{equation}
\end{defn}
Let's denote the new right Riemann-Liouville fractional derivative that we wish to propose  by $~^{ABR}D^\alpha_b$ and its corresponding integral by $~^{AB}I^\alpha_b$.
From classical fractional calculus it is known that $(_{a}I^\alpha Qf)(t)=Q (I_b^\alpha f)(t)$ and $(_{a}D^\alpha Qf)(t)=Q (D_b^\alpha f)(t)$. We wish this relation to be satisfied for the new left and right fractional derivatives and integrals.

\begin{eqnarray} \label{d4}
 \nonumber
  (~_{a}~^{ABR}D^\alpha Qf)(t) &=& \frac{B(\alpha)}{1-\alpha} \frac{d}{dt} \int_a^t f(a+b-x)E_\alpha [-\alpha \frac{(t-x)^\alpha}{1-\alpha}]dx\\  \nonumber
   &=& \frac{B(\alpha)}{1-\alpha} \frac{d}{dt} \int_{a+b-t}^b f(u)E_\alpha [-\alpha \frac{(u-(a+b-t))^\alpha}{1-\alpha}]dx, \\
\end{eqnarray}
where the change of variable $u=a+b-x$ is used. The relation (\ref{d4}) suggests the following definition for the new right fractional derivative:
\begin{defn} The right fractional new derivative with ML kernel of order $\alpha \in [0,1]$ is defined by
$$(~^{ABR}D^\alpha_b f)(t)=-\frac{B(\alpha)}{1-\alpha}\frac{d}{dt} \int_t^b f(x)E_\alpha[-\alpha \frac{(x-t)^\alpha}{1-\alpha}]dx.$$

\end{defn}
On the other hand,

\begin{eqnarray} \label{dd}
 \nonumber
  (~^{AB}~_{a}I^\alpha Qf)(t) &=& \frac{1-\alpha}{B(\alpha}f(a+b-t)+\frac{\alpha}{B(\alpha)} (~_{a}I^\alpha Qf)(t) \\  \nonumber
   &=& \frac{1-\alpha}{B(\alpha}f(a+b-t)+\frac{\alpha}{B(\alpha)} Q(I_b^\alpha f)(t) \\
   &=& Q[\frac{1-\alpha}{B(\alpha}f(t)+\frac{\alpha}{B(\alpha)} (I_b^\alpha f)(t)].
\end{eqnarray}

Moreover,  we solve the equation $(~^{AB}D^\alpha_b f)(t)=u(t)$. Indeed,
\begin{eqnarray}
 \nonumber
  (~^{AB}D^\alpha_b f)(t) &=& (~^{AB}D^\alpha_b Q Qf)(t)=(Q~^{AB}~_{a}D^\alpha Qf)(t)=u(t), \\
\end{eqnarray}
or
$$(~^{AB}~_{a}D^\alpha Qf)(t)=Qu(t),$$

and hence, $$Qf(t)=\frac{1-\alpha}{B(\alpha)}Qu(t)+ \frac{\alpha}{B(\alpha)} ~_{a}I^\alpha Q u(t)=\frac{1-\alpha}{B(\alpha)}Qu(t)+ \frac{\alpha}{B(\alpha)}Q I_b^\alpha  u(t).$$
Applying $Q$ to both sides above, we have
\begin{equation}\label{d5}
 f(t)=\frac{1-\alpha}{B(\alpha)}u(t)+ \frac{\alpha}{B(\alpha)} I_b^\alpha  u(t).
\end{equation}
Now, relations (\ref{dd}) and (\ref{d5}) suggest the following definition for the new right fractional integral:
\begin{defn} \label{right integral}
The right fractional new integral with ML kernel of order $\alpha \in [0,1]$ is defined by
$$(~^{AB}I_b^\alpha f)(t)=\frac{1-\alpha}{B(\alpha)}f(t)+ \frac{\alpha}{B(\alpha)} I_b^\alpha  f(t)$$
\end{defn}

Before we present an integration by part formula for the new proposed fractional derivatives and integrals we introduce the following function spaces:
For $p\geq1$ and $\alpha >0$, we define

\begin{equation}\label{n1}
  (~^{AB}~_{a}I^\alpha (L_p)=\{f: f=~^{AB}~_{a}I^\alpha \varphi, ~~\varphi \in L_p(a,b)\}.
\end{equation}
and
\begin{equation}\label{n2}
  (~^{AB}I_b^\alpha (L_p)=\{f: f=~^{AB}I_b^\alpha \phi, ~~\phi \in L_p(a,b)\}.
\end{equation}

\indent

In \cite{Abdon} it was shown that the left fractional   operator $~^{ABR}~_{a}D^\alpha$ and its associate fractional integral $~^{AB}~_{a}I^\alpha$ satisfy $(~^{ABR}~_{a}D^\alpha {AB}~_{a}I^\alpha f)(t)=f(t)$ and above we have shown that  $(~^{ABR}D_b^\alpha ~^{AB}I_b^\alpha f)(t)=f(t)$ . On the other we next prove that $(~
^{AB}~_{a}I^\alpha ~^{ABR}~_{a}D^\alpha  f)(t)=f(t)$ and $(~^{AB}I_b^\alpha ~^{ABR}D_b^\alpha  f)(t)=f(t)$ and hence the function spaces $(~^{AB}~_{a}I^\alpha (L_p)$ and $(~^{AB}I_b^\alpha (L_p)$ are nonempty.

\begin{thm}
The functions $(~^{ABR}~_{a}D^\alpha  f)(t)$ and $(~^{ABR}D_b^\alpha  f)(t)$ satisfy the equations
$$(~^{AB}~_{a}I^\alpha g)(t)=f(t),~~~~~~(~^{AB}I_b^\alpha g)(t)=f(t),$$ respectively.
\end{thm}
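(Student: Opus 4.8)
The plan is to prove the left identity $(~^{AB}~_{a}I^\alpha ~^{ABR}~_{a}D^\alpha f)(t)=f(t)$ directly, and then to deduce the right identity $(~^{AB}I_b^\alpha ~^{ABR}D_b^\alpha f)(t)=f(t)$ from it through the $Q$-operator, exactly as the right derivative and integral were themselves manufactured from the left ones in (\ref{d4}) and (\ref{dd}). Throughout I abbreviate $\mu=\frac{\alpha}{1-\alpha}$.

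For the left identity, the first step is to expand the Mittag-Leffler kernel as its defining power series and integrate term by term, writing
$$\int_a^t f(x)E_\alpha\!\left[-\alpha\frac{(t-x)^\alpha}{1-\alpha}\right]dx=\sum_{k=0}^\infty \left(-\mu\right)^k (~_{a}I^{\alpha k+1}f)(t),$$
where I recognise $\frac{1}{\Gamma(\alpha k+1)}\int_a^t (t-x)^{\alpha k}f(x)\,dx=(~_{a}I^{\alpha k+1}f)(t)$. Differentiating and using $\frac{d}{dt}(~_{a}I^{\beta}f)=(~_{a}I^{\beta-1}f)$ (the $k=0$ term producing $~_{a}I^{0}f=f$) gives the series form of the Riemann--Liouville type derivative,
$$(~^{ABR}~_{a}D^\alpha f)(t)=\frac{B(\alpha)}{1-\alpha}\sum_{k=0}^\infty \left(-\mu\right)^k (~_{a}I^{\alpha k}f)(t).$$
Applying $~^{AB}~_{a}I^\alpha$, whose two pieces are $\frac{1-\alpha}{B(\alpha)}(\cdot)$ and $\frac{\alpha}{B(\alpha)}(~_{a}I^\alpha \cdot)$, the first piece cancels the prefactor and returns $\sum_{k\ge0}(-\mu)^k (~_{a}I^{\alpha k}f)$; the second, after the semigroup law $~_{a}I^\alpha~_{a}I^{\alpha k}=~_{a}I^{\alpha(k+1)}$ and the factor $\frac{\alpha}{1-\alpha}=\mu$, produces $-\sum_{j\ge1}(-\mu)^j(~_{a}I^{\alpha j}f)$ after reindexing $j=k+1$. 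The two sums telescope: every term with $k\ge1$ cancels and only $~_{a}I^{0}f=f$ survives, so the composition equals $f(t)$.

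For the right identity I would not repeat the computation but invoke the intertwining relations already recorded: $(~^{ABR}~_{a}D^\alpha Qf)(t)=Q(~^{ABR}D_b^\alpha f)(t)$ by construction in (\ref{d4}), and $(~^{AB}~_{a}I^\alpha Q\phi)(t)=Q(~^{AB}I_b^\alpha \phi)(t)$ by (\ref{dd}). Taking $\phi=~^{ABR}D_b^\alpha f$, rewriting $Q(~^{ABR}D_b^\alpha f)=~^{ABR}~_{a}D^\alpha Qf$, and applying the left identity to $Qf$, the right-hand side collapses to $Q(Qf)$; then $Q^2=\mathrm{id}$ yields $(~^{AB}I_b^\alpha ~^{ABR}D_b^\alpha f)(t)=f(t)$.

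The main obstacle is analytic rather than algebraic: justifying the term-by-term integration and differentiation of the Mittag-Leffler series and the rearrangement that produces the telescoping. These require uniform convergence on $[a,b]$, which follows from the entire-function growth of $E_\alpha$ together with the boundedness of the iterated integrals $~_{a}I^{\alpha k}f$ for $f\in L_p(a,b)$. I would therefore either establish this convergence first, or, alternatively, run the whole argument through the Laplace transform with $a=0$, where $\mathcal{L}\{E_\alpha(-\lambda t^\alpha)\}=s^{\alpha-1}/(s^\alpha+\lambda)$ turns every step into the single algebraic identity $\frac{1-\alpha}{B(\alpha)}\cdot\frac{s^\alpha+\mu}{s^\alpha}\cdot\frac{B(\alpha)}{1-\alpha}\cdot\frac{s^\alpha}{s^\alpha+\mu}=1$, bypassing the convergence bookkeeping entirely.
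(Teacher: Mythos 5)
Your proposal is correct, but your primary argument takes a genuinely different route from the paper. The paper treats the statement as an equation to be solved for $g$: it writes $(~^{AB}~_{a}I^\alpha g)(t)=f(t)$ out as $\frac{1-\alpha}{B(\alpha)}g(t)+\frac{\alpha}{B(\alpha)}(~_{a}I^\alpha g)(t)=f(t)$, applies the Laplace transform, solves algebraically for $G(s)=\frac{B(\alpha)}{1-\alpha}\frac{F(s)s^\alpha}{s^\alpha+\frac{\alpha}{1-\alpha}}$, and identifies the inverse transform as $(~^{ABR}~_{a}D^\alpha f)(t)$ --- this is exactly the ``fallback'' you mention in your last sentence, and it tacitly requires $a=0$ and injectivity of the Laplace transform on the relevant class of functions. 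Your main argument instead expands the Mittag-Leffler kernel into its power series, obtains the representation $(~^{ABR}~_{a}D^\alpha f)(t)=\frac{B(\alpha)}{1-\alpha}\sum_{k\geq 0}\left(\frac{-\alpha}{1-\alpha}\right)^k(~_{a}I^{\alpha k}f)(t)$, and verifies the identity by the semigroup law and a telescoping cancellation; this is a direct verification that works on $[a,b]$ for arbitrary $a$, makes the inverse-pair structure transparent, and yields a series formula for the $ABR$ derivative that is useful in its own right, at the price of the convergence bookkeeping you correctly flag (absolute and uniform convergence follows from the bound $\|~_{a}I^{\alpha k}f\|\leq \frac{(b-a)^{\alpha k}}{\Gamma(\alpha k+1)}\|f\|$, so the gap is fillable). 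Your treatment of the right case coincides with the paper's intent --- the paper only says ``by means of the $Q$-operator'' --- but you actually supply the chain $Q(~^{AB}I_b^\alpha~^{ABR}D_b^\alpha f)=~^{AB}~_{a}I^\alpha~^{ABR}~_{a}D^\alpha Qf=Qf$ followed by $Q^2=\mathrm{id}$, which is more explicit than what the paper records.
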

\begin{proof}
We just prove the left case. The right case can be proved by means of the $Q-$operator. From, the definition the first equation is equivalent to

$$\frac{1-\alpha}{B(\alpha)}g(t)+\frac{\alpha}{B(\alpha)} (~_{a}I^\alpha g)(t)=f(t).$$

Apply the Laplace transform to see that

$$\frac{1-\alpha}{B(\alpha)}G(s)+\frac{\alpha}{B(\alpha)} s^{-\alpha} G(s)=F(s).$$
From which it follows that

$$G(s)=\frac{B(\alpha)}{1-\alpha} \frac{F(s) s^{\alpha}}{s^\alpha+ \frac{\alpha}{1-\alpha}}.$$
Finally, the Laplace inverse will lead to that $g(t)=(~^{ABR}~_{a}D^\alpha  f)(t)$.
\end{proof}
\begin{thm} (Integration by parts)\label{Integration by parts}
Let $\alpha >0$, $p\geq 1,~q \geq 1$, and $\frac{1}{p}+\frac{1}{q}\leq 1+\alpha$ ($p\neq1$  and $q\neq1$ in the case $\frac{1}{p}+\frac{1}{q}=1+\alpha$ ). Then
\begin{itemize}
  \item If  $\varphi(x) \in L_p(a,b) $ and $\psi(x) \in L_q(a,b)$ , then
   \begin{eqnarray} \label{IBP 1}
            \nonumber
             \int_a^b \varphi(x) (~^{AB}~_{a}I^\alpha\psi)(x)dx &=& \frac{1-\alpha}{B(\alpha)}\int_a^b \psi(x)\varphi(x)dx+ \frac{\alpha}{B(\alpha)}\int_a^b(I_b^\alpha\varphi)(x) \psi(x)dx \\
              &=& \int_a^b \psi(x) (~^{AB}I_b^\alpha\varphi(x)dx
           \end{eqnarray}

      and similarly,
      \begin{eqnarray}
        \int_a^b \varphi(x) (~^{AB}I_b^\alpha\psi)(x)dx&=& \frac{1-\alpha}{B(\alpha)}\int_a^b \psi(x)\varphi(x)dx+ \frac{\alpha}{B(\alpha)}\int_a^b(~_{a}I^\alpha\varphi)(x) \psi(x)dx \\
         &=&  \int_a^b \psi(x) (~^{AB}~_{a}I^\alpha\varphi)(x)dx
      \end{eqnarray}

  \item If $f(x) \in ~^{AB}I_b^\alpha (L_p) $ and $g(x) \in ~^{AB}~_{a}I^\alpha (L_q)$, then  $$\int_a^b f(x) (~^{ABR}~_{a}D^\alpha g)(x)dx=\int_a^b (~^{ABR}D_b^\alpha f)(x) g(x)dx$$
\end{itemize}
\end{thm}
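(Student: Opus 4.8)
The plan is to reduce everything to the classical fractional integration-by-parts formula (the fractional analogue of Fubini's theorem) for the Riemann--Liouville operators,
$$\int_a^b \varphi(x)(~_{a}I^\alpha \psi)(x)\,dx=\int_a^b \psi(x)(I_b^\alpha \varphi)(x)\,dx,$$
which is valid precisely under the stated hypotheses $p\geq 1$, $q\geq 1$, $\frac1p+\frac1q\leq 1+\alpha$ (with $p\neq1,q\neq1$ in the boundary case); see \cite{Kilbas}. I would take this as the single analytic ingredient and build both bullets on top of it.

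For the first bullet I would insert the definition of $~^{AB}~_{a}I^\alpha$ into the left integral and split by linearity, writing
$$\int_a^b \varphi(x)(~^{AB}~_{a}I^\alpha \psi)(x)\,dx=\frac{1-\alpha}{B(\alpha)}\int_a^b \varphi(x)\psi(x)\,dx+\frac{\alpha}{B(\alpha)}\int_a^b \varphi(x)(~_{a}I^\alpha \psi)(x)\,dx.$$
Applying the classical formula to the second term transfers the Riemann--Liouville integral from $\psi$ onto $\varphi$, after which collecting the two terms under the definition of $~^{AB}I_b^\alpha$ yields the first identity. The companion statement for $~^{AB}I_b^\alpha$ follows verbatim with the left and right Riemann--Liouville integrals interchanged; alternatively one may derive it from the first via the $Q$-operator, using $(~_{a}I^\alpha Qf)(t)=Q(I_b^\alpha f)(t)$ recorded earlier.

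For the second bullet I would exploit the membership hypotheses together with the inversion relations already established. Since $f\in~^{AB}I_b^\alpha(L_p)$ there exists $\phi\in L_p$ with $f=~^{AB}I_b^\alpha \phi$, so that $(~^{ABR}D_b^\alpha f)=\phi$ by the relation $(~^{ABR}D_b^\alpha~^{AB}I_b^\alpha f)(t)=f(t)$; symmetrically $g=~^{AB}~_{a}I^\alpha \varphi$ for some $\varphi\in L_q$ with $(~^{ABR}~_{a}D^\alpha g)=\varphi$. Substituting these,
$$\int_a^b f(x)(~^{ABR}~_{a}D^\alpha g)(x)\,dx=\int_a^b (~^{AB}I_b^\alpha \phi)(x)\,\varphi(x)\,dx,$$
and the second identity from the first bullet (applied to the pair $\varphi,\phi$) rewrites this as $\int_a^b \phi(x)(~^{AB}~_{a}I^\alpha \varphi)(x)\,dx=\int_a^b (~^{ABR}D_b^\alpha f)(x)\,g(x)\,dx$, which is exactly the assertion.

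I expect the genuine difficulty to be analytic rather than algebraic: one must justify every interchange of integration order and confirm that all integrals converge absolutely. This is precisely the role of the constraint $\frac1p+\frac1q\leq 1+\alpha$, which is the sharp condition under which the Riemann--Liouville operator pairs $L_p$ against $L_q$, while the exclusion $p\neq1,q\neq1$ in the equality case guards against the endpoint breakdown of Fubini. Once the classical formula is granted under these hypotheses, the contributions of the local part $\frac{1-\alpha}{B(\alpha)}$ cause no trouble, being already symmetric in the two functions.
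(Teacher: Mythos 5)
Your proposal is correct and follows essentially the same route as the paper: the first bullet by inserting the definition of $~^{AB}~_{a}I^\alpha$ and invoking the classical Riemann--Liouville integration by parts, and the second bullet by representing $f=~^{AB}I_b^\alpha\phi$, $g=~^{AB}~_{a}I^\alpha\varphi$ and using the inversion relations to reduce to the first bullet. The only (cosmetic) difference is at the end of the second bullet: you cite the already-proven first-part identity directly, whereas the paper re-expands $\int(~^{AB}I_b^\alpha\phi)\varphi$ and recombines using $(~_{a}I^\alpha\varphi)(x)=\frac{B(\alpha)}{\alpha}(~^{AB}~_{a}I^\alpha\varphi)(x)-\frac{1-\alpha}{\alpha}\varphi(x)$, arriving at the same conclusion after a cancellation.
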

\begin{proof}
\begin{itemize}
  \item From the definition and the integration by parts for( classical) Riemann-Liouville fractional integrals we have
  \begin{eqnarray}
         \nonumber
          \int_a^b \varphi(x) (~^{AB}~_{a}I^\alpha\psi)(x)dx &=& \int_a^b \varphi(x) [\frac{1-\alpha}{B(\alpha)}\psi(x)+\frac{\alpha}{B(\alpha)}~_{a}I^\alpha \psi(x)]dx  \\ \nonumber
           &=&\frac{1-\alpha}{B(\alpha)} \int_a^b\varphi(x) \psi(x)  dx+\frac{\alpha}{B(\alpha)}\int_a^b \psi (x)I_b^\alpha \varphi(x) dx \\ \nonumber
           &=& \int_a^b \psi(x)[\frac{1-\alpha}{B(\alpha)} \varphi(x) +\frac{\alpha}{B(\alpha)} I_b^\alpha \varphi (x)]dx \\
           &=&\int_a^b \psi(x) (~^{AB}I_b^\alpha\varphi(x)dx.
  \end{eqnarray}
  The other case follows similarly by Definition \ref{right integral} and the integration by parts for( classical) Riemann-Liouville fractional integrals.
  \item From definition and the first part we have
  \begin{eqnarray}
   \nonumber
    \int_a^b f(x) (~^{ABR}~_{a}D^\alpha g)(x)dx &=& \int_a^b ( ~^{AB}I_b^\alpha \phi)(x).(~ ~^{ABR}~_{a}D^\alpha  \circ~^{ABR} ~_{a}I^\alpha \varphi)(x)dx\\ \nonumber
     &=& \int_a^b ( ~^{AB}I_b^\alpha \phi)(x). \varphi(x)dx \\ \nonumber
     &=&  \frac{1-\alpha}{B(\alpha)} \int_a^b \phi(x)\varphi(x)dx +\frac{\alpha}{B(\alpha)} \int_a^b  \phi(x) (~_{a}I^\alpha \varphi)(x)dx\\ \nonumber
     &=& \frac{1-\alpha}{B(\alpha)} \int_a^b (~^{ABR}D^\alpha _b f)(x)(~^{ABR}~_{a}D^\alpha g)dx+ \\ \nonumber
     &+&\frac{\alpha}{B(\alpha)} \int_a^b (~^{ABR}D^\alpha _b f)(x)[\frac{B(\alpha)}{\alpha}g(x)- \frac{1-\alpha}{\alpha}(~^{ABR}~_{a}D^\alpha g)]dx \\ \nonumber
      &=& \int_a^b (~^{ABR}D_b^\alpha f)(x) g(x)dx.
     \end{eqnarray}
     In the proof, the identity $(~_{a}I^\alpha \varphi )(x)=\frac{B(\alpha)}{\alpha}(~^{AB}~_{a}I^\alpha \varphi )(x)-\frac{1-\alpha}{\alpha} \varphi(x) $ derived from (\ref{d3}) is used.
\end{itemize}

\end{proof}

\begin{exam}This example is a numerical application of Theorem \ref{Integration by parts}.
\begin{itemize}
  \item To verify (\ref{IBP 1}), let $\psi(x)=x$, $\varphi(x)=1-x$, $\alpha=\frac{1}{2}$, $[a,b]=[0,1]$ and $B(\alpha)=1$. Then,
  $$~^{AB}~_{0}I^{1/2}x=\frac{x}{2}+\frac{1}{2} \frac{\Gamma(2)x^{3/2}}{\Gamma(5/2)}=\frac{x}{2}+\frac{2x^{3/2}}{3\sqrt{\pi}},$$
  and
  $$~^{AB}I_1^{1/2}(1-x)=\frac{1-x}{2}+\frac{2 (1-x)^{3/2}}{3\sqrt{\pi}}.$$
  Hence, the left hand side of (\ref{IBP 1}) results in
  \begin{equation}\label{ver1}
    \int_a^b \varphi(x) (~^{AB}~_{a}I^\alpha \psi)(x)dx=\int_0^1 (1-x)~^{AB}{~_0}I^{1/2}x=\int_0^1 (1-x)[\frac{x}{2}+\frac{2x^{3/2}}{3\sqrt{\pi}}] dx=\frac{1}{12}+\frac{8}{105\sqrt{\pi}},
  \end{equation}
  and
 \begin{equation}\label{ver2}
    \int_a^b \psi(x) (~^{AB}I_b^\alpha\varphi(x)dx=\int_0^1 x (~^{AB}I_1^{1/2}(1-x)dx=\int_0^1x[\frac{1-x}{2}+\frac{2 (1-x)^{3/2}}{3\sqrt{\pi}}]dx=\frac{1}{12}+\frac{8}{105\sqrt{\pi}}.
  \end{equation}

  \item To verify the second part of Theorem \ref{Integration by parts}, let $f(x)=\frac{1-x}{2}+\frac{2 (1-x)^{3/2}}{3\sqrt{\pi}}$ and $g(x)=\frac{x}{2}+\frac{2x^{3/2}}{3\sqrt{\pi}}$, with $\alpha=\frac{1}{2}$, $[a,b]=[0,1]$ and $B(\alpha)=1$.
      Then,
      $$\int_a^b f(x) (~^{ABR}~_{a}D^\alpha g)(x)dx=\int_0^1[\frac{1-x}{2}+\frac{2 (1-x)^{3/2}}{3\sqrt{\pi}}]xdx=\frac{1}{12}+\frac{8}{105\sqrt{\pi}},$$
      and
    $$\int_a^b (~^{ABR}D_b^\alpha f)(x) g(x)dx=\int_0^1 (1-x)[\frac{x}{2}+\frac{2x^{3/2}}{3\sqrt{\pi}}] dx=\frac{1}{12}+\frac{8}{105\sqrt{\pi}}.$$
\end{itemize}
\end{exam}
From \cite{Abdon} we recall the relation between the Riemann-Liouville and Caputo new derivatives as
\begin{equation}\label{relation C and R}
  (~^{ABC}~_{0}D^\alpha f)(t)=(~^{ABR}~_{0}D^\alpha f)(t)-\frac{B(\alpha)}{1-\alpha} f(0)E_\alpha (-\frac{\alpha}{1-\alpha}t^\alpha)
\end{equation}

From \cite{Antony etal} recall the (left) generalized fractional integral operator
\begin{equation}\label{GIO}
( \textbf{ E}^\gamma_{\rho, \mu, \omega,a^+}\varphi )(x)=\int_a^x (x-t)^{\mu-1} E_{\rho,\mu}^\gamma [\omega (x-t)^\rho]\varphi(t) dt,~~x>a.
\end{equation}
Analogously, the (right) generalized fractional integral operator can be defined by

\begin{equation}\label{GIOr}
( \textbf{ E}^\gamma_{\rho, \mu, \omega,b^-}\varphi )(x)=\int_x^b (t-x)^{\mu-1} E_{\rho,\mu}^\gamma [\omega (t-x)^\rho]\varphi(t) dt,~~x<b,
\end{equation}
where $E_{\rho,\mu}^\gamma (z)=\sum_{k=0}^\infty \frac{(\gamma)_k z^k}{\Gamma(\rho k+\mu)k!},$ is the generalized Mittag-Leffler function which is defined for complex $\rho,\mu,\gamma ~(Re(\rho)>0)$ \cite{Antony etal, Kilbas}.

\begin{defn}
The new (right) Caputo fractional derivative of order $0<\alpha<1$ is defined by
 $$(~^{ABC}D^\alpha_b f)(t)=-\frac{B(\alpha)}{1-\alpha} \int_t^b f^\prime(x)E_\alpha[-\alpha \frac{(x-t)^\alpha}{1-\alpha}]dx,$$
\end{defn}
Next, we prove the right version of (\ref{relation C and R}) by making use of the $Q-$operator.
\begin{prop}The right new Riemann-Liouville fractional derivative and the new right Caputo fractional derivative are related by the identity:
\begin{equation}\label{right relation C and R}
  (~^{ABC}~D_b^\alpha f)(t)=(~^{ABR}D_b^\alpha f)(t)-\frac{B(\alpha)}{1-\alpha} f(b)E_\alpha (-\frac{\alpha}{1-\alpha}(b-t)^\alpha)
\end{equation}
\end{prop}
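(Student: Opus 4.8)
The plan is to deduce the right-sided identity from the already established left-sided relation (\ref{relation C and R}) by conjugating with the reflection operator $Q$. The two facts I need are the intertwining relations
$$(~^{ABC}~_{a}D^\alpha Q f)(t) = Q(~^{ABC}D_b^\alpha f)(t), \qquad (~^{ABR}~_{a}D^\alpha Q f)(t) = Q(~^{ABR}D_b^\alpha f)(t),$$
which say that $Q$ turns a left derivative into the corresponding right derivative. The second of these is exactly the computation carried out in (\ref{d4}) that motivated the definition of $~^{ABR}D_b^\alpha$; the first is its Caputo analogue.

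First I would verify the Caputo intertwining. Writing $(~^{ABC}~_{a}D^\alpha Q f)(t)$ out from (\ref{d1}), I use $(Qf)^\prime(x) = -f^\prime(a+b-x)$ together with the substitution $u = a+b-x$. Since $t-(a+b-u) = u-(a+b-t)$, the kernel $E_\alpha[-\alpha (t-x)^\alpha/(1-\alpha)]$ becomes $E_\alpha[-\alpha (u-(a+b-t))^\alpha/(1-\alpha)]$, and after reversing the limits of integration the sign produced by $(Qf)^\prime$ is absorbed; the result is precisely the right Caputo derivative evaluated at $a+b-t$, i.e. $Q(~^{ABC}D_b^\alpha f)(t)$. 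The Riemann--Liouville case is identical, the only difference being that $d/dt$ sits outside the integral, exactly as in (\ref{d4}).

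With both intertwining relations in hand, I apply the left relation (\ref{relation C and R}) --- in its form for a general starting point $a$, so that $f(a)$ and $(t-a)^\alpha$ appear in place of $f(0)$ and $t^\alpha$ --- to the function $Qf$:
$$(~^{ABC}~_{a}D^\alpha Qf)(t)=(~^{ABR}~_{a}D^\alpha Qf)(t)-\frac{B(\alpha)}{1-\alpha} (Qf)(a)\,E_\alpha\!\left(-\frac{\alpha}{1-\alpha}(t-a)^\alpha\right).$$
Now I replace the two derivative terms by $Q(~^{ABC}D_b^\alpha f)(t)$ and $Q(~^{ABR}D_b^\alpha f)(t)$ via the intertwining identities, and use $(Qf)(a)=f(a+b-a)=f(b)$. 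Applying $Q$ to both sides --- equivalently substituting $t\mapsto a+b-t$, which sends $(t-a)^\alpha$ to $(b-t)^\alpha$ and leaves the constant $f(b)$ untouched --- yields exactly (\ref{right relation C and R}).

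The one genuinely delicate step is the sign bookkeeping in the intertwining computation: the factor $-1$ coming from differentiating through $Q$ must cancel precisely against the orientation reversal caused by the change of variables $u=a+b-x$ (which also swaps the limits of integration), and one must check that the argument of the Mittag--Leffler kernel is symmetric under this reflection. Everything after that is routine substitution, so I expect that change-of-variable identity to be the crux; the transport of the boundary term is immediate once $(Qf)(a)=f(b)$ is noted.
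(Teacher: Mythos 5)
Your proof is correct and follows essentially the same route as the paper: apply the left-sided relation (\ref{relation C and R}) to $Qf$, use the dual (intertwining) facts $Q(~^{ABC}~_{0}D^\alpha f)=(~^{ABC}D_b^\alpha Qf)$ and $Q(~^{ABR}~_{0}D^\alpha f)=(~^{ABR}D_b^\alpha Qf)$, note $(Qf)(a)=f(b)$, and reflect back with $Q$. The only difference is that you explicitly verify the Caputo intertwining relation via the change of variables $u=a+b-x$, which the paper simply asserts as a known dual fact, so your write-up is if anything more complete.
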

\begin{proof}
Apply the $Q-$operator to the identity (\ref{relation C and R}) and make use of the dual facts
 $Q(~^{ABR}~_{0}D^\alpha f)(t)=(~^{ABR}D_b^\alpha Qf)(t)$ and $Q(~^{ABC}~_{0}D^\alpha f)(t)=(~^{ABC}D_b^\alpha Qf)(t)$, to obtain that

 $$(~^{ABC}~D_b^\alpha Qf)(t)=(~^{ABR}D_b^\alpha Qf)(t)-\frac{B(\alpha)}{1-\alpha} f(0)E_\alpha (-\frac{\alpha}{1-\alpha}(b-t)^\alpha).$$
 Now replace $f(t)$ by $(Qf)(t)=f(b-t)$ to conclude our claim.

\end{proof}

\begin{prop} (Integration by parts for the Caputo fractional derivative "$(~^{ABC}~_{a}D^\alpha),~~~a=0$") \label{C by parts}
\begin{itemize}
  \item $\int_0^b (~^{ABC}~_{a}D^\alpha f)(t)g(t)= \int_0^b f(t) (~^{ABR}D_b^\alpha g)(t)+ \frac{B(\alpha)}{1-\alpha} f(t) \textbf{ E}^1_{\alpha,1, \frac{-\alpha}{1-\alpha},b^-}g )(t)|_0^b$.
  \item $\int_0^b (~^{ABC}D_b^\alpha f)(t)g(t)= \int_0^b f(t) (~^{ABR}~_{0}D^\alpha g)(t)- \frac{B(\alpha)}{1-\alpha} f(t) \textbf{ E}^1_{\alpha,1, \frac{-\alpha}{1-\alpha},0^+}g )(t)|_0^b$.
  \begin{proof}
  The proof of the first part follows by Theorem \ref{Integration by parts} and (\ref{relation C and R}) and the proof of the second part follows by Theorem \ref{Integration by parts} and  (\ref{right relation C and R}).
  \end{proof}
\end{itemize}

\end{prop}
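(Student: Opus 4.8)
The plan is to reduce each formula to the Riemann--Liouville integration by parts already established in Theorem \ref{Integration by parts}, absorbing the difference between the Caputo and Riemann--Liouville derivatives into the stated boundary term. For the first identity I would start from the relation (\ref{relation C and R}), which writes $(~^{ABC}~_{0}D^\alpha f)(t)$ as $(~^{ABR}~_{0}D^\alpha f)(t)$ minus the correction $\frac{B(\alpha)}{1-\alpha}f(0)E_\alpha(-\frac{\alpha}{1-\alpha}t^\alpha)$. Multiplying by $g(t)$ and integrating over $[0,b]$ splits the left-hand side into a Riemann--Liouville piece and a single remainder integral.

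To the Riemann--Liouville piece I apply the second bullet of Theorem \ref{Integration by parts} with the roles of $f$ and $g$ interchanged, which yields $\int_0^b (~^{ABR}~_{0}D^\alpha f)(t)g(t)\,dt=\int_0^b f(t)(~^{ABR}D_b^\alpha g)(t)\,dt$, exactly the main term on the right. It then remains to identify the remainder $-\frac{B(\alpha)}{1-\alpha}f(0)\int_0^b E_\alpha(-\frac{\alpha}{1-\alpha}t^\alpha)g(t)\,dt$ with the boundary contribution.

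The key observation is that the generalized Mittag--Leffler function collapses here: since $(1)_k=k!$ one has $E_{\alpha,1}^1(z)=E_\alpha(z)$, so the right generalized operator (\ref{GIOr}) with $\gamma=1,\rho=\alpha,\mu=1,\omega=\frac{-\alpha}{1-\alpha}$ becomes $(\textbf{ E}^1_{\alpha,1,\frac{-\alpha}{1-\alpha},b^-}g)(x)=\int_x^b E_\alpha(-\frac{\alpha}{1-\alpha}(t-x)^\alpha)g(t)\,dt$. Evaluating at the endpoints, the value at $x=b$ vanishes because the domain of integration is empty, while the value at $x=0$ is precisely $\int_0^b E_\alpha(-\frac{\alpha}{1-\alpha}t^\alpha)g(t)\,dt$; multiplying by $\frac{B(\alpha)}{1-\alpha}f$ and forming the difference $\big|_0^b$ reproduces the remainder with the correct sign, completing the first part.

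For the second identity I would proceed symmetrically, using the right analogue (\ref{right relation C and R}) in place of (\ref{relation C and R}), applying the second bullet of Theorem \ref{Integration by parts} directly so that $\int_0^b (~^{ABR}D_b^\alpha f)(t)g(t)\,dt=\int_0^b f(t)(~^{ABR}~_{0}D^\alpha g)(t)\,dt$, and matching the remainder against the left generalized operator (\ref{GIO}) with $a=0$. Now only the $x=b$ endpoint survives, and the overall sign flips because the correction term in (\ref{right relation C and R}) is attached to $f(b)$ rather than $f(0)$. The only thing that requires care is this sign-and-endpoint bookkeeping together with the reduction $E_{\alpha,1}^1=E_\alpha$; once those are in hand the result follows immediately.
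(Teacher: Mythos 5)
Your proposal is correct and follows exactly the route the paper intends: the paper's proof is a one-line citation of Theorem \ref{Integration by parts} together with (\ref{relation C and R}) (resp.\ (\ref{right relation C and R})), and your argument simply fills in the details of that reduction, including the observation $E^1_{\alpha,1}=E_\alpha$ and the endpoint/sign bookkeeping that turns the remainder integral into the stated boundary term $\big|_0^b$. No gaps; your write-up is in fact more complete than the proof given in the paper.
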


\section{Fractional Euler-Lagrange Equations}
We prove the Euler-Lagrange equations for a Lagrangian containing the left new Caputo derivative.

\begin{thm}\label{A1}
Let $0<\alpha \leq 1$ be non-integer, $b \in \mathbb{R},~~0<b$,  Assume that the functional $J:C^2[0,b]\rightarrow  \mathbb{R}$ of the form
 $$J(f)=\int_{0}^{b} L(t,f(t),~^{ABC}~_{0}D^\alpha f(t) )dt$$
 has a local extremum in $S=\{y \in C^2[0,b]: ~~y(0)=A, y(b)=B\}$ at some $f \in S$, where
 $L:[0,b]\times \mathbb{R}\times \mathbb{R}\rightarrow \mathbb{R}$. Then,
\begin{equation}\label{E1}
[L_1(s) + ~^{ABR}D_b^\alpha L_2(s)]  =0,~\texttt{for all}~ s \in [0,b],
\end{equation}
where $L_1(s)= \frac{\partial L}{\partial f}(s)$ and $L_2(s)=\frac{\partial L}{\partial ~^{ABC}~_{0}D^\alpha f}(s)$.
\end{thm}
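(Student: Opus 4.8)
The plan is to run the classical first-variation argument, letting the fractional integration by parts of Proposition \ref{C by parts} play the role that ordinary integration by parts plays in the standard Euler--Lagrange derivation. First I would fix an arbitrary \emph{admissible variation} $\eta \in C^2[0,b]$ subject to the homogeneous endpoint conditions $\eta(0)=\eta(b)=0$, so that $f+\epsilon\eta \in S$ for every real $\epsilon$. Since $f$ is a local extremum of $J$ on $S$, the scalar function $\epsilon \mapsto J(f+\epsilon\eta)$ has a local extremum at $\epsilon=0$, whence $\frac{d}{d\epsilon}J(f+\epsilon\eta)\big|_{\epsilon=0}=0$. Differentiating under the integral sign (legitimate by the $C^2$ regularity of $L$, $f$, $\eta$ together with the smoothing Mittag--Leffler kernel) and using the linearity of the operator, namely ${}^{ABC}_{0}D^\alpha(f+\epsilon\eta)={}^{ABC}_{0}D^\alpha f+\epsilon\,{}^{ABC}_{0}D^\alpha\eta$, I obtain
$$0=\int_0^b\Big[L_1(t)\,\eta(t)+L_2(t)\,({}^{ABC}_{0}D^\alpha\eta)(t)\Big]\,dt.$$

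The crucial step is to transfer the fractional derivative off the variation $\eta$. For this I would apply the first identity of Proposition \ref{C by parts} with $\eta$ in the role of $f$ and $L_2$ in the role of $g$, giving
$$\int_0^b ({}^{ABC}_{0}D^\alpha\eta)(t)\,L_2(t)\,dt=\int_0^b \eta(t)\,({}^{ABR}D_b^\alpha L_2)(t)\,dt+\frac{B(\alpha)}{1-\alpha}\,\eta(t)\,(\mathbf{E}^1_{\alpha,1,\frac{-\alpha}{1-\alpha},b^-}L_2)(t)\Big|_0^b.$$
Because $\eta(0)=\eta(b)=0$, the boundary term, being evaluated only at $t=0$ and $t=b$, vanishes identically; this is precisely where the fixed-endpoint hypothesis is consumed. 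Substituting back yields
$$\int_0^b\Big[L_1(t)+({}^{ABR}D_b^\alpha L_2)(t)\Big]\,\eta(t)\,dt=0.$$

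Finally, since the last identity holds for \emph{every} admissible $\eta$, I would invoke the fundamental lemma of the calculus of variations: a continuous function whose integral against every such $\eta$ vanishes must itself be identically zero. Under the stated hypotheses the bracket $L_1+{}^{ABR}D_b^\alpha L_2$ is continuous on $[0,b]$, so it must vanish pointwise, which is exactly the claimed equation (\ref{E1}).

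The step I expect to demand the most care is not any single computation but the analytic bookkeeping surrounding the integration by parts: verifying that $L_2$ belongs to the function class for which Proposition \ref{C by parts} is valid, and justifying differentiation under the integral. Once those regularity points are secured, the boundary term genuinely collapses through $\eta(0)=\eta(b)=0$ and the conclusion follows from the standard fundamental-lemma argument.
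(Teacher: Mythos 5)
Your proposal is correct and follows essentially the same route as the paper: first variation with admissible variations $\eta$ vanishing at the endpoints, Proposition \ref{C by parts} to shift the fractional derivative onto $L_2$ (producing the $~^{ABR}D_b^\alpha L_2$ term and a boundary term killed by $\eta(0)=\eta(b)=0$), and the fundamental lemma of the calculus of variations. The only cosmetic difference is that you obtain the vanishing of the first variation by differentiating $\epsilon\mapsto J(f+\epsilon\eta)$ at $\epsilon=0$, whereas the paper uses an $\epsilon$-Taylor expansion of $L$ together with a sign argument on $\delta J$; these are interchangeable forms of the same argument.
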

\begin{proof}
Without loss of generality, assume that $J$ has local maximum in $S$ at $f$. Hence, there exists an $\epsilon>0$ such that $J(\widehat{f})-J(f)\leq 0$ for all $\widehat{f}\in S$ with $\|\widehat{f}-f\|=\sup_{t \in \mathbb{N}_a \cap ~_{b}\mathbb{N}} |\widehat{f}(t)-f(t)|< \epsilon$. For any $\widehat{f} \in S$ there is an $\eta \in H=\{y \in C^2[0,b], ~~y(0)=y(b)=0\}$ such that $\widehat{f}=f+\epsilon \eta$. Then, the $\epsilon-$Taylor's theorem implies that
$$L(t,f,\widehat{f})=L(t,f+\epsilon \eta,~^{ABC}~_{0}D^\alpha f+\epsilon ~^{ABC}~_{0}D^\alpha \eta)=L(t,f,~^{ABC}~_{0}D^\alpha f)+ \epsilon [\eta L_1+~^{ABC}~_{0}D^\alpha \eta L_2]+O(\epsilon^2).$$ Then,

\begin{eqnarray}\nonumber
  J(\widehat{f})-J(f) &=& \int_0^bL(t,\widehat{f}(t),~^{ABC}~_{0}D^\alpha \widehat{f}(t))-\int_0^b L(t,f(t),~^{ABC}~_{0}D^\alpha f(t)) \\
  &=& \epsilon \int_0^b[\eta(t) L_1(t)+ (~^{ABC}~_{0}D^\alpha \eta)(t) L_2(t)]+ O(\epsilon^2).
   \end{eqnarray}
Let the quantity $\delta J(\eta,y)=\int_0^b[\eta(t) L_1(t)+ (~^{ABC}~_{0}D^\alpha \eta)(t) L_2(t)]dt$ denote the first variation of $J$.

Evidently, if $\eta \in H$ then $-\eta \in H$, and $\delta J(\eta,y)=-\delta J(-\eta,y)$. For $\epsilon$ small, the sign of $J(\widehat{f})-J(f)$ is determined by the sign of first variation, unless $\delta J(\eta,y)=0$ for all $\eta \in H$. To make the parameter $\eta$ free, we use the integration by part formula in Proposition \ref{C by parts}, to reach
$$\delta J(\eta,y)=\int_0^b \eta (s)[L_1(s) + ~^{ABR}D_b^\alpha L_2(s)]+\eta(t)\frac{B(\alpha)}{1-\alpha} (\textbf{ E}^1_{\alpha,1, \frac{-\alpha}{1-\alpha},b^-}L_2 )(t)|_0^b =0,$$ for all $\eta \in H$, and hence the result follows by the fundamental Lemma of calculus of variation.
\end{proof}
The term $(\textbf{ E}^1_{\alpha,1, \frac{-\alpha}{1-\alpha},b^-}L_2 )(t)|_0^b =0$ above is called the natural boundary condition.

\indent
Similarly, if we allow the Lagrangian to depend on the right Caputo fractional derivative, we can state:
\begin{thm}\label{A2}
Let $0<\alpha \leq 1$ be non-integer, $b \in \mathbb{R},~~0<b$,  Assume that the functional $J:C^2[0,b]\rightarrow  \mathbb{R}$ of the form
 $$J(f)=\int_{0}^{b} L(t,f(t),~^{ABC}D_b^\alpha f(t) )dt$$
 has a local extremum in $S=\{y \in C^2[0,b]: ~~y(0)=A, y(b)=B\}$ at some $f \in S$, where
 $L:[0,b]\times \mathbb{R}\times \mathbb{R}\rightarrow \mathbb{R}$. Then,
\begin{equation}\label{E1}
[L_1(s) + ~^{ABR}~_{0}D^\alpha L_2(s)]  =0,~\texttt{for all}~ s \in [0,b],
\end{equation}
where $L_1(s)= \frac{\partial L}{\partial f}(s)$ and $L_2(s)=\frac{\partial L}{\partial ~^{ABC}D_b^\alpha f}(s)$.
\end{thm}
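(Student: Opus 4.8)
The plan is to transpose the proof of Theorem \ref{A1} under the left/right duality, replacing the first integration-by-parts identity of Proposition \ref{C by parts} by its second. First I would assume, without loss of generality, that $J$ attains a local maximum at $f \in S$, so that $J(\widehat f)-J(f)\le 0$ for every admissible $\widehat f$ sufficiently close to $f$. Every such competitor can be written $\widehat f = f + \epsilon \eta$ with $\eta \in H=\{y \in C^2[0,b]:~y(0)=y(b)=0\}$, and by Taylor's theorem together with the linearity of $~^{ABC}D_b^\alpha$ one gets
$$L(t,\widehat f,~^{ABC}D_b^\alpha \widehat f)=L(t,f,~^{ABC}D_b^\alpha f)+\epsilon[\eta L_1+(~^{ABC}D_b^\alpha \eta) L_2]+O(\epsilon^2).$$
Hence the first variation is
$$\delta J(\eta,f)=\int_0^b [\eta(t) L_1(t)+(~^{ABC}D_b^\alpha \eta)(t) L_2(t)]\,dt,$$
and, exactly as in Theorem \ref{A1}, extremality of $f$ forces $\delta J(\eta,f)=0$ for all $\eta \in H$.

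The decisive step is to free $\eta$ from the fractional derivative. Here I would apply the second identity of Proposition \ref{C by parts}, with the roles $f\mapsto \eta$ and $g\mapsto L_2$, to the second summand, obtaining
$$\int_0^b (~^{ABC}D_b^\alpha \eta)(t) L_2(t)\,dt=\int_0^b \eta(t)(~^{ABR}~_{0}D^\alpha L_2)(t)\,dt-\frac{B(\alpha)}{1-\alpha}\,\eta(t)(\textbf{ E}^1_{\alpha,1,\frac{-\alpha}{1-\alpha},0^+} L_2)(t)\Big|_0^b.$$
The boundary contribution is the difference of the values at $t=b$ and $t=0$, each carrying a factor $\eta(b)$ or $\eta(0)$; since $\eta \in H$ satisfies $\eta(0)=\eta(b)=0$, this term vanishes identically. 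Substituting back, the first variation collapses to
$$\delta J(\eta,f)=\int_0^b \eta(s)\,[L_1(s)+(~^{ABR}~_{0}D^\alpha L_2)(s)]\,ds=0\qquad\text{for all}~\eta \in H.$$

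Finally I would invoke the fundamental lemma of the calculus of variations: the bracket $L_1+~^{ABR}~_{0}D^\alpha L_2$ is continuous on $[0,b]$ and integrates to zero against every $\eta \in H$, so it must vanish identically, which is the asserted Euler--Lagrange equation. I do not anticipate any real obstacle, since the argument is the mirror image of that for Theorem \ref{A1}: the only points requiring attention are selecting the correct version of Proposition \ref{C by parts} --- the one whose boundary term is governed by the left operator $\textbf{ E}^1_{\alpha,1,\frac{-\alpha}{1-\alpha},0^+}$ and which produces the left Riemann--Liouville derivative $~^{ABR}~_{0}D^\alpha$ --- and verifying that this boundary term is annihilated by the endpoint conditions defining $H$.
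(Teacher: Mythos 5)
Your proposal is correct and follows essentially the same route as the paper: the paper's proof of this theorem simply says to repeat the argument of Theorem \ref{A1} with the second integration-by-parts identity of Proposition \ref{C by parts}, which is exactly what you carry out, including the correct left operators $~^{ABR}~_{0}D^\alpha$ and $\textbf{E}^1_{\alpha,1,\frac{-\alpha}{1-\alpha},0^+}$ in the bulk and boundary terms. Your write-up is in fact more detailed than the paper's one-line proof, and the handling of the boundary term via $\eta(0)=\eta(b)=0$ matches the paper's treatment in Theorem \ref{A1}.
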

\begin{proof}
The proof is similar to  Theorem \ref{A1} by applying the second integration by parts in Proposition \ref{C by parts} to get the natural boundary condition of the form $(\textbf{ E}^1_{\alpha,1, \frac{-\alpha}{1-\alpha},0^+}L_2 )(t)|_0^b =0$.
\end{proof}

\begin{thm}\cite{Antony etal}
Let $\rho, \mu,\gamma, \nu, \sigma, \lambda \in \mathbb{C}$ ($Re(\rho), Re(\mu), Re(\nu)>0$), then
\begin{equation}\label{Kil1}
  \int_0^x (x-t)^{\mu-1} E^\gamma_{\rho,\mu}(\lambda [x-t]^\rho) t^{\nu-1} E^\sigma_{\rho,\nu}(\lambda t^\rho)dt=x^{\mu+\nu-1}E^{\gamma+\sigma}_{\rho,\mu+\nu}(\lambda x^\rho).
\end{equation}
In particular, if $\gamma=1,~~\mu=1$  and $\rho=\alpha$, we have
\begin{equation}\label{Kil2}
  \int_0^x  E_{\alpha}(\lambda [x-t]^\alpha) t^{\nu-1} E^\sigma_{\alpha,\nu}(\lambda t^\alpha)dt=x^{\nu}E^{1+\sigma}_{\alpha,1+\nu}(\lambda x^\alpha).
\end{equation}

\end{thm}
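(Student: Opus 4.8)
The plan is to prove the convolution identity \eqref{Kil1} by the direct series--expansion method: expand both generalized Mittag-Leffler factors under the integral, reduce the resulting kernel integral to a Beta function, and then resum along diagonals using the Chu--Vandermonde identity for Pochhammer symbols. First I would insert the defining series $E^\gamma_{\rho,\mu}(z)=\sum_{k\ge0}\frac{(\gamma)_k z^k}{\Gamma(\rho k+\mu)k!}$ into each factor, writing
\[
(x-t)^{\mu-1}E^\gamma_{\rho,\mu}(\lambda(x-t)^\rho)=\sum_{m=0}^\infty \frac{(\gamma)_m\lambda^m}{\Gamma(\rho m+\mu)\,m!}(x-t)^{\rho m+\mu-1},
\]
and analogously for $t^{\nu-1}E^\sigma_{\rho,\nu}(\lambda t^\rho)$. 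The hypotheses $Re(\rho),Re(\mu),Re(\nu)>0$ guarantee that each series converges locally uniformly and that the exponents $\rho m+\mu-1$ and $\rho n+\nu-1$ have real part exceeding $-1$, so the term-by-term integration is legitimate by absolute convergence on $[0,x]$.

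Next I would interchange the two summations with the integral and evaluate the resulting kernel integral as a Beta function: the substitution $t=xu$ yields
\[
\int_0^x (x-t)^{\rho m+\mu-1}t^{\rho n+\nu-1}\,dt = x^{\rho(m+n)+\mu+\nu-1}\,\frac{\Gamma(\rho m+\mu)\,\Gamma(\rho n+\nu)}{\Gamma(\rho(m+n)+\mu+\nu)}.
\]
The two Gamma factors in the numerator cancel exactly against the denominators coming from the two series, leaving the double sum
\[
\sum_{m,n\ge0}\frac{(\gamma)_m(\sigma)_n\,\lambda^{m+n}}{m!\,n!}\;\frac{x^{\rho(m+n)+\mu+\nu-1}}{\Gamma(\rho(m+n)+\mu+\nu)}.
\]

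The decisive step is to collect this double sum along the diagonals $k=m+n$. Factoring out $x^{\rho k+\mu+\nu-1}/\Gamma(\rho k+\mu+\nu)$ and reading the inner sum over $m=0,\dots,k$, I would invoke the Pochhammer (Chu--Vandermonde) convolution
\[
\sum_{m=0}^k \binom{k}{m}(\gamma)_m(\sigma)_{k-m}=(\gamma+\sigma)_k,
\]
which is simply coefficient comparison in the identity $(1-z)^{-\gamma}(1-z)^{-\sigma}=(1-z)^{-(\gamma+\sigma)}$ together with $(1-z)^{-\gamma}=\sum_k\frac{(\gamma)_k}{k!}z^k$. This converts the diagonal sum into $\sum_{k\ge0}\frac{(\gamma+\sigma)_k\lambda^k}{k!\,\Gamma(\rho k+\mu+\nu)}\,x^{\rho k+\mu+\nu-1}=x^{\mu+\nu-1}E^{\gamma+\sigma}_{\rho,\mu+\nu}(\lambda x^\rho)$, which is precisely \eqref{Kil1}. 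The special case \eqref{Kil2} then follows at once by setting $\gamma=1$, $\mu=1$, $\rho=\alpha$ and noting that $(1)_k=k!$ forces $E^1_{\alpha,1}=E_\alpha$, while $x^{\mu+\nu-1}=x^\nu$ and $E^{\gamma+\sigma}_{\rho,\mu+\nu}=E^{1+\sigma}_{\alpha,1+\nu}$.

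I expect the main obstacle to be bookkeeping rather than conceptual. The only genuine input beyond routine manipulation is the Chu--Vandermonde identity above; the one point that truly requires care is the justification of interchanging the double series with the integral, for which the region $Re(\rho),Re(\mu),Re(\nu)>0$ and the entire (in $\lambda$) character of the Mittag-Leffler series supply absolute convergence uniformly on compact $x$-intervals.
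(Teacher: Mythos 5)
The paper does not prove this theorem at all: it is imported verbatim from the cited reference \cite{Antony etal} (Kilbas--Saigo--Saxena) and used as a black box to derive the differentiation formulas (\ref{ABR derivative of ML}) and (\ref{ABC derivative of ML}). So there is no in-paper argument to compare against; your proposal supplies a proof where the paper supplies a citation. Your argument is correct and is essentially the classical proof of this convolution identity: the series expansion of both generalized Mittag-Leffler factors, the Beta-function evaluation $\int_0^x (x-t)^{a-1}t^{b-1}dt=\frac{\Gamma(a)\Gamma(b)}{\Gamma(a+b)}x^{a+b-1}$ (valid for $Re(a),Re(b)>0$, which is exactly where the hypotheses $Re(\rho),Re(\mu),Re(\nu)>0$ enter), the exact cancellation of the Gamma factors, and the diagonal resummation via the Vandermonde convolution $\sum_{m=0}^k\binom{k}{m}(\gamma)_m(\sigma)_{k-m}=(\gamma+\sigma)_k$ are all stated accurately, and the specialization $\gamma=\mu=1$, $\rho=\alpha$ with $(1)_k=k!$ giving $E^1_{\alpha,1}=E_\alpha$ is handled correctly. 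Two small points of care, both of which you flag appropriately: the interchange of the double sum with the integral needs Tonelli applied to absolute values, where one uses $|(x-t)^{\rho m+\mu-1}|=(x-t)^{Re(\rho)m+Re(\mu)-1}$ and the superexponential decay of $1/|\Gamma(\rho m+\mu)|$ when $Re(\rho)>0$; and the Beta formula must be invoked for complex exponents with positive real part, not just real ones. What your route buys over the paper's is self-containedness at the cost of length; what the paper's citation buys is brevity, since the identity is standard in the literature on generalized Mittag-Leffler functions.
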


From \cite{Kilbas} we recall also the following differentiation formula that will be helpful

For $\alpha, \mu,\gamma, \lambda \in \mathbb{C}$ ($Re(\alpha>0$) and $n \in \mathbb{N}$ we have

\begin{equation}\label{kildif}
  (\frac{d}{dz})^n [z^{\mu-1} E^\gamma_{\alpha, \mu}(\lambda z^\alpha)]=z^{\mu-n-1}E^\gamma_{\alpha, \mu-n}(\lambda z^\alpha),
\end{equation}
Now, by the help of (\ref{Kil2}) and (\ref{kildif}), we have

\begin{equation}\label{ABR derivative of ML}
  ~^{ABR}~_{0}D^\alpha [x^{\nu-1}E^\sigma_{\alpha, \nu}(\lambda x^\alpha ) ]=\frac{B(\alpha)}{1-\alpha} \frac{d}{dx}[x^{\nu}E^{1+\sigma}_{\alpha, 1+\nu}(\lambda x^\alpha) ]=\frac{B(\alpha)}{1-\alpha}x^{\nu-1}E^{1+\sigma}_{\alpha, \nu}(\lambda x^\alpha)
\end{equation}

Similarly, by the help of (\ref{kildif}) and (\ref{Kil2}), we have

\begin{eqnarray}\label{ABC derivative of ML}
 \nonumber
 ~^{ABC}~_{0}D^\alpha [x^{\nu-1}E^\sigma_{\alpha, \nu}(\lambda x^\alpha) ] &=& \frac{B(\alpha)}{1-\alpha} \int_0^x x^{\nu}E_{\alpha}(\lambda (x-t)^\alpha )\frac{d}{dt}[t^{\nu-1}E^\sigma_{\alpha, \nu}(\lambda x^\alpha) ] dt \\
   &=&\frac{B(\alpha)}{1-\alpha}x^{\nu-1}E^{1+\sigma}_{\alpha, \nu}(\lambda x^\alpha).
\end{eqnarray}

\begin{rem}\label{The zero derivative case}
An interesting observation of (\ref{ABR derivative of ML}) and  (\ref{ABC derivative of ML}) is that the function

\begin{eqnarray}
  g(x) &=& \lim_{\nu\rightarrow 0^+}\frac{1-\alpha}{B(\alpha)}x^{\nu-1}E^{-1}_{\alpha, \nu}(\lambda x^\alpha) \\
   &=&  \frac{\alpha x^{\alpha-1}}{B(\alpha) \Gamma(\alpha)},
\end{eqnarray}
 is a nonzero function whose fractional $ABR$ and $ABC$ derivative is zero. This can be seen since $(-1)_0=1,~~(-1)_1=-1$ and $(-1)_k=0$ for $k=2,3,4,...$ and since $$E^{0}_{\alpha, \nu}(\lambda,x)=\frac{x^{\nu-1}}{\Gamma(\nu)}\rightarrow 0,~~\nu\rightarrow 0^+.$$ Note here that the function $g(x)$ tends to the constant function $1$ when $\alpha$ tends to $1$.

\end{rem}

Using the following  relation (14) in \cite{Abdon}
 \begin{equation}\label{a relation of ABC and ABR}
   (~^{ABC}~_{0}D^\alpha f)(t)=(~^{ABR}~_{0}D^\alpha f)(t)-\frac{B(\alpha)}{1-\alpha}f(0) E_\alpha(\lambda t^\alpha),~~\lambda=\frac{-\alpha}{1-\alpha},
 \end{equation}

 and  the identity (see \cite{Kilbas} page 78 for example)
       \begin{equation}\label{f integral of ML functions}
         (~_{0}I^\alpha t^{\beta-1} E_{\mu,\beta} [\lambda  t^\mu](x)=x^{\alpha+\beta-1}E_{\mu,\alpha+\beta} [\lambda  x^\mu],
       \end{equation}
   where  the $ML-$function  with two parameters $\alpha$ and $\beta$ is given by

\begin{equation}\label{cl ML2}
 E_{\alpha,\beta}(z)= \sum_{k=0}^\infty \frac{z^k}{\Gamma(\alpha k+\beta)},~~~(z, \beta \in \mathbb{C};~Re(\alpha)>0),
\end{equation}
where $E_{\alpha,1}(z)=E_{\alpha}(z)$,
we can state the  following result which is very useful tool to solve fractional dynamical systems within Caputo fractional derivative with $ML$ kernals.
\begin{prop}
For $0< \alpha< 1$, we have

\begin{eqnarray}\label{fintegral of Caputo}
 \nonumber
  ( ~^{AB}~_{a}I^\alpha ~^{ABC}~_{a}D^\alpha f)(x) &=& f(x)-f(a)E_{\alpha}(\lambda (x-a)^\alpha)-\frac{\alpha}{1-\alpha}f(a) x^\alpha E_{\alpha,\alpha+1}(\lambda (x-a)^\alpha) \\
   &=& f(x)-f(a).
\end{eqnarray}
 Similarly,

 \begin{equation}\label{rfintegral of Caputo}
   ( ~^{AB}I_b^\alpha ~^{ABC}D_b^\alpha f)(x)=f(x)-f(b)
 \end{equation}
\end{prop}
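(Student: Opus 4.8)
The plan is to reduce the composition to the already-established inverse relation $(~^{AB}~_{a}I^\alpha ~^{ABR}~_{a}D^\alpha f)(x)=f(x)$ plus an explicitly computable correction term. First I would take the Caputo--Riemann--Liouville bridge (\ref{a relation of ABC and ABR}), written at the base point $a$ rather than $0$ (its evident base-$a$ analogue),
$$(~^{ABC}~_{a}D^\alpha f)(t)=(~^{ABR}~_{a}D^\alpha f)(t)-\frac{B(\alpha)}{1-\alpha}f(a)E_\alpha(\lambda(t-a)^\alpha),$$
and apply the operator $~^{AB}~_{a}I^\alpha$ to both sides. By the inverse relation $(~^{AB}~_{a}I^\alpha ~^{ABR}~_{a}D^\alpha f)(x)=f(x)$ established earlier, the first term collapses to $f(x)$, so the entire task reduces to evaluating $(~^{AB}~_{a}I^\alpha E_\alpha(\lambda(\cdot-a)^\alpha))(x)$.

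For that evaluation I would unfold the definition (\ref{d3}) of $~^{AB}~_{a}I^\alpha$ into its two pieces, $\frac{1-\alpha}{B(\alpha)}\,\mathrm{Id}+\frac{\alpha}{B(\alpha)}\,{}_aI^\alpha$. The pointwise piece returns $\frac{1-\alpha}{B(\alpha)}E_\alpha(\lambda(x-a)^\alpha)$, while the genuinely fractional piece is handled by identity (\ref{f integral of ML functions}) read with $\beta=1$ and base point shifted to $a$, giving ${}_aI^\alpha E_\alpha(\lambda(\cdot-a)^\alpha)=(x-a)^\alpha E_{\alpha,\alpha+1}(\lambda(x-a)^\alpha)$. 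Collecting constants and multiplying by $-\frac{B(\alpha)}{1-\alpha}f(a)$ reproduces exactly the first displayed equality, with the two Mittag-Leffler correction terms $-f(a)E_\alpha(\lambda(x-a)^\alpha)$ and $-\frac{\alpha}{1-\alpha}f(a)(x-a)^\alpha E_{\alpha,\alpha+1}(\lambda(x-a)^\alpha)$.

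The second equality --- that these two corrections combine to the single constant $-f(a)$ --- is where the real work sits. Writing $z=\lambda(x-a)^\alpha$ and using $\lambda=-\frac{\alpha}{1-\alpha}$, the factor $\frac{\alpha}{1-\alpha}(x-a)^\alpha$ becomes exactly $-z$, so the bracket to be shown equal to $1$ is $E_\alpha(z)-zE_{\alpha,\alpha+1}(z)$. I would verify this directly from the series definition (\ref{cl ML2}): reindexing $zE_{\alpha,\alpha+1}(z)=\sum_{k\ge 0}z^{k+1}/\Gamma(\alpha(k+1)+1)$ shows it equals $E_\alpha(z)$ with its $k=0$ term removed, leaving precisely $1/\Gamma(1)=1$. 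This telescoping of the Mittag-Leffler series against the fractional integral is the main obstacle, and it is the only place where the specific value of $\lambda$ is essential.

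Finally, for the right-sided identity I would not recompute but invoke the $Q$-operator, exactly as in Proposition~\ref{right relation C and R}: since $Q$ intertwines left and right integrals and derivatives and interchanges the roles of $f(a)$ and $f(b)$, applying $Q$ to the left result immediately yields $(~^{AB}I_b^\alpha ~^{ABC}D_b^\alpha f)(x)=f(x)-f(b)$.
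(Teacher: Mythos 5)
Your proposal is correct and follows essentially the same route the paper intends: the proposition is stated immediately after the paper recalls the ABC--ABR bridge relation (\ref{a relation of ABC and ABR}) and the Mittag-Leffler integration formula (\ref{f integral of ML functions}), which is precisely the pair of tools you apply, together with the inverse relation $(~^{AB}~_{a}I^\alpha ~^{ABR}~_{a}D^\alpha f)(x)=f(x)$ established in Section 2. Your write-up in fact supplies the details the paper leaves implicit, notably the series telescoping $E_\alpha(z)-zE_{\alpha,\alpha+1}(z)=1$ that collapses the two correction terms to $-f(a)$, and the observation that the $x^\alpha$ appearing in the paper's intermediate expression should read $(x-a)^\alpha$.
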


\begin{exam}

In order to exemplify our results we study an example of physical interest under Theorem \ref{A1}. Namely, let us consider the following fractional  action,

  $J(y)=\int_0^b[\frac{1}{2}( ~^{ABC}~_{0}D^\alpha y(t))^2-V(y(t))],$ where $0<\alpha <1$ and with $y(0),~~y(b)$ are assigned or with the natural boundary condition $(\textbf{ E}^1_{\alpha,1, \frac{-\alpha}{1-\alpha},b^-}~^{ABC}~_{0}D^\alpha y(t) )(t)|_0^b =0 $.  Then,  the Euler-Lagrange equation by applying Theorem \ref{A1} is
       $$(~^{ABR}D_b^\alpha ~o ~~^{ABC}~_{0}D^\alpha y)(s)-\frac{dV}{dy}(s)=0~\texttt{for all}~ s \in [0,b].$$
       Here,  we remark that it is of interest  to deal with the above Euler- Lagrange equations obtained in the above example, where we have composition of  right and left type  fractional derivatives. For such a composition in the classical fractional case together with the action of the $Q-$operator we refer to \cite{TDFdelay}.

       Finally, we solve the above fractional Euler-Lagrange equations for certain potential functions with $\alpha =\frac{1}{2}$, and $B(\alpha)=1$.

       \begin{itemize}
         \item We consider the free particle case $V\equiv 0$: The Euler-Lagrange equations will be reduced to $(~^{ABR}D_b^\alpha ~^{ABC}~_{0}D^\alpha y)(t)=0$. By applying $~^{AB}I_b^\alpha $ to both sides  we reach at
             $$(~^{ABC}~_{0}D^\alpha y)(t)=0.$$
            Then, by Remark \ref{The zero derivative case} with $B(\alpha)=1$ for simplicity (otherwise $B(\alpha)\rightarrow 1$ as $\alpha \rightarrow1 $) , we conclude that
             \begin{equation}\label{sol rep}
               y(t)=c_1+ \frac{\alpha t^{\alpha-1}}{B(\alpha) \Gamma(\alpha)},
             \end{equation}
             and hence using $y(0)=A$, the solution becomes
             \begin{equation}\label{csol rep}
               y(t)=y(0)+ \frac{\alpha t^{\alpha-1}}{B(\alpha) \Gamma(\alpha)},
             \end{equation}
          We remark here that as $\alpha \rightarrow1 $, we get the classical case.
         \item Let $V(y)=c y^2/2$. Then, the fractional Euler-Lagrange equations are become
         $(~^{ABR}D_b^\alpha ~^{ABC}~_{0}D^\alpha  y)(t)=cy(t).$ Then, applying $~^{AB}I_b^\alpha$ and $~^{AB}~_{0}I^\alpha$ respectively together with use of   (\ref{fintegral of Caputo}), we reach at the integral equation
\begin{equation}\label{sol rep2}
           y(t)=y(0)+c (~^{AB}~_{0}I^\alpha ~^{AB}I_b^\alpha y)(t).
           \end{equation}
           \end{itemize}

           Notice that, when $\alpha$ tends to $1$ we get the classical result.
         \end{exam}

\section{Conclusions}

     The fractional derivatives introduced in \cite{Abdon} are of interest for real world problems since they contain nonsingular Mittag-Leffler  kernels. They, obey the calculations done by the $Q-$operator to introduce the right fractional operators.
  We show that the $Q-$operator is an effective tool that helped in defining the right fractional integrals and derivatives and it helps to confirm some  identities by using its dual action.
  The obtained integration by parts formula, in case of the  Caputo derivative in the sense of  Atangana-Baleanu, contains terms expressed by means of the integral operators studied in \cite{Antony etal} whose kernels are  generalized Mittag-Leffler functions.
   The integration by parts formulas produced the corresponding Euler-Lagrange equations under the existence of natural boundary conditions expressed by means of integral operators.
   The obtained formulas such as the integration by parts for the Caputo derivatives  in the left case with $a=0$ and the variational fractional problem with lower limit $0$, all can be generalized by using the Laplace transform starting at $a$ and then applying the $Q-$operator in its general version $(Qf)(t)=f(a+b-t)$ where $a$ can be different from $0$.
 In order to illustrate our results we provided an illustrative example. The results presented in this manuscript can be used successfully for the fractional variational principles and their applications in Physics and Engineering  as well as for control theory.

\end{document}